\documentclass[letterpaper, 10 pt, conference]{ieeeconf}  

\IEEEoverridecommandlockouts                              
\overrideIEEEmargins

\usepackage{graphics} 
\usepackage{epsfig} 
\usepackage{times} 
\usepackage[cmex10]{amsmath} 
\usepackage{amssymb}  
\usepackage{nccmath}
\usepackage{color}
\usepackage{bm}

\newtheorem{proposition}{Proposition}
\newtheorem{assumption}{Assumption}

\newcommand{\dd}{\mathop{}\!\mathrm{d}}
\long\def\/*#1*/{}

\title{\LARGE \bf Transient Performance of Electric Power Networks under Colored Noise}

\author{T. Coletta, B. Bamieh, and Ph. Jacquod 
\thanks{T. Coletta and Ph. Jacquod are with the School of Engineering of the University of Applied Sciences of Western Switzerland
CH-1951 Sion, Switzerland. Emails: \tt\small{(tommaso.coletta, philippe.jacquod)@hevs.ch}}
\thanks{B. Bamieh is with the Department of Mechanical Engineering at the University of California at Santa Barbara, Santa Barbara, CA 93106, USA
Email: \tt\small{bamieh@engr.ucsb.edu}}}

\begin{document}

\maketitle
\thispagestyle{empty}
\pagestyle{empty}

\begin{abstract}
New classes of performance measures have been recently introduced to quantify the transient response to external disturbances
of coupled dynamical systems on complex networks. 
These performance measures are time-integrated quadratic forms in the system's coordinates or their time derivative.
So far, investigations of these performance measures have been restricted to Dirac-$\delta$ impulse disturbances, in which case
they can be alternatively interpreted as giving the long time output variances for stochastic white noise power 
demand/generation fluctuations.
Strictly speaking, the approach is therefore restricted to power 
fluctuating on time scales shorter than the shortest time scales in the swing equations. To account for 
power productions from new renewable energy sources, 
we extend these earlier works to the relevant case of colored noise power fluctuations, with a finite correlation time $\tau > 0$. 
We calculate a closed-form expression for generic quadratic performance measures. Applied to specific cases, this leads
to a spectral representation of performance measures as a sum over the non-zero modes of the network Laplacian.
Our results emphasize the competition between inertia, damping and the Laplacian modes, whose balance is determined to a large
extent by the noise correlation time scale $\tau$.
\end{abstract}


\section{Introduction}
Transmission system operators constantly monitor electric power grids and evaluate their potential response to 
possible faults and unexpected disturbances~\cite{Bialek08}. Standardly, frequency nadir and maximum rate of change of frequency (RoCoF)
are the indicators of choice. This is so because when they exceed pre-defined values, they trigger corrective measures such as activation of primary control
and disconnection of power plants from the grid, or even,  in worst cases, of entire geographical areas. Frequency nadir and RoCoF however 
only partially characterize the transient excursion away from the previously operating synchronous state. Inspired by consensus and 
synchronization studies~\cite{Bamieh12,Summers15,Siami14,Fardad14}, recent works have introduced new transient performance measures 
following disturbances on busses~\cite{bamieh2013price, Gayme15, Gayme16,Poola17,paganini2017global} and on power lines~\cite{coletta2017performance}.
The method is
particularly appealling because (i) these performance measures have a clear physical meaning, quantifying the additional ohmic 
losses~\cite{bamieh2013price, Gayme15, Gayme16}, the primary control effort~\cite{Poola17,coletta2017performance} originating from the transient
or the phase coherence of the grid~\cite{Bamieh12,Summers15,coletta2017performance}; (ii) they are integrated quadratic forms 
in phase or frequency excursions that can be expressed as  
$\mathcal{L}_2$ norms of the system's output. As such, they are conveniently calculated 
through an observability Gramian solution to a Lyapunov equation~\cite{Zhou96}.

Technically speaking, the advantage of the $\mathcal{L}_2$-based approach 
over infinity norm measures (frequency nadir and RoCoF) is not only that it is mathematically tractable,
but also that it provides combined information on both the amplitude and the duration of the transient in both voltage phases and frequencies.
Its drawback is that so far it has been applied to Dirac-$\delta$ impulse disturbances only~\cite{bamieh2013price, Gayme15, 
Gayme16,Poola17,paganini2017global,coletta2017performance}, which is equivalent to
considering long time output variances for stochastic white noise power demand/generation fluctuations. 
Fluctuations of photovoltaic or wind turbine power generation have however finite correlation times~\cite{schmietendorf2017impact}, typically on the order of minutes
or more, i.e. significantly longer than typical time scales in swing equations. 
To model them it is therefore desirable to go beyond white noise power fluctuations.
In this work we extend the observability Gramian formalism to treat the swing dynamics under colored noise inputs.
In the spirit of the method proposed in \cite{zare2017colour} in the different context of turbulent flows, we achieve this by means of a filter to generate 
colored noise from a white noise stochastic input~\cite{Fox88}. We obtain fluctuating power generations with exponentially decaying
correlations, with a tunable characteristic correlation time $\tau$ which we take as a parameter in our model. We
provide a closed form expression for any output representing a quadratic performance index. To
illustrate our theory, we consider a performance measure quantifying voltage angle coherence in the network.
For fluctuating injections localized on a single node, we show how different correlation time scales lead to qualitatively different behaviors of this
quadratic performance measure.

This paper is organized as follows. Section \ref{sec:Notations} introduces some mathematical notations.
Section \ref{sec:Model} presents the model and the observability Gramian formalism for colored noise.
In Sec.~\ref{sec:Expression generic performance} we diagonalize the dynamics and present a closed form expression for generic performance measures.
In Sec.~\ref{sec:Applications} we illustrate our theory for a specific case of an angle coherence performance measure and
discuss the obtained results.
A  brief conclusion is given in Sec. \ref{sec:Conclusion}.

\section{Mathematical notation}\label{sec:Notations}
Given a vector ${\bm v}\in\mathbb{R}^N$ and a matrix ${\bm M}\in \mathbb{R}^{N\times N}$
we denote their transpose by ${\bm v}^\top$ and ${\bm M}^\top$.
In terms of the components $v_1,\ldots,v_N$, we also denote the vector $\bm v$ as ${\bm v}\equiv\textrm{vec}(\{v_i\})$,
while $\textrm{diag}(\{v_i\})\in\mathbb{R}^{N\times N}$ denotes the diagonal matrix having $v_1,\ldots,v_N$ as diagonal entries.
The $l^\textrm{th}$ unit vector $\hat{{\bm e}}_l\in \mathbb{R}^{N}$ has components $(\hat{e}_l)_i = \delta_{il}$.

We denote undirected weighted graphs by $\mathcal{G}=(\mathcal{N},\mathcal{E}, \mathcal{W})$ where 
$\mathcal{N}$ is the set of $N$ vertices,
$\mathcal{E}$ is the set of edges, and $\mathcal{W}=\{b_{ij}\}$ is the set of edge weights,
with $b_{ij}=0$ whenever $i$ and $j$ are not connected by an edge,
and $b_{ij}=b_{ji}>0$ otherwise.
The graph Laplacian ${\bm L}\in\mathbb{R}^{N\times N}$ is the symmetric matrix with components $L_{ij}=-b_{ij}$ if $i\neq j$
and $L_{ii}=\sum_{i\neq j}b_{ij}$.
We denote by $\{\lambda_1,\ldots\lambda_N\}$ and $\{{\bm u}^{(1)},\ldots,{\bm u}^{(N)}\}$ the eigenvalues and orthonormalized eigenvectors of $\bm L$.
The orthogonal matrix $\bm T\in \mathbb{R}^{N\times N}$ having ${\bm u}^{(i)}$ as $i^\textrm{th}$ column diagonalizes $\bm L$,
i.e. ${\bm T}^\top {\bm L} {\bm T} = {\bm \Lambda}$ where ${\bm \Lambda}=\textrm{diag}(\{\lambda_i\})$.
The zero row and column sum property of $\bm L$ implies that $\lambda_1=0$ and that ${{\bm u}^{(1)}}^\top=[1,\ldots,1]/\sqrt{N}$.
In connected graphs, all remaining eigenvalues of $\bm L$ are strictly positive, $\lambda_i>0$ for $i\geq2$.
The column row vector product $\bm{u}^{(1)}{\bm{u}^{(1)}}^\top$ is the $N\times N$ matrix having $1/N$ for all its entries.

\section{POWER NETWORK MODEL}\label{sec:Model}
We consider the swing dynamics of high voltage transmission power networks in the DC approximation.
This approximation of the full nonlinear dynamics assumes uniform and constant voltage magnitudes, 
purely susceptive transmission lines and small voltage phase differences. 
We consider a Kron reduced network such that each of its $N$ nodes models a synchronous machine (generator or consumer) of 
rotational inertia $m_i>0$ and damping coefficient $d_i>0$.
The steady state power flow equations relating the active power injections $\bm P$ 
to the voltage phases $\bm \theta$ at every node define the nominal operating point ${\bm\theta}^\star$
through $\bm P ={\bm L} {\bm \theta}^\star$. 
Here, ${\bm L}$ is the Laplacian matrix of the graph modeling
the Kron reduced electric network and whose edge weights are given by the effective susceptances $b_{ij}\geq0$.

Subject to a power injection disturbance ${\bm p}(t)$, the system deviates from the nominal operating point 
according to ${\bm \theta}(t)={\bm\theta}^\star+{\bm \varphi}(t)$, and
${\bm \omega}(t)=\dot{\bm \varphi}(t)$.
In the DC approximation, and in a frame rotating at the nominal frequency of the network, the swing equations
read \cite{Bialek08}
\begin{equation}\label{eq:Swing}
{\bm M}\ddot{{\bm \varphi}} = -{\bm D}\dot{{\bm \varphi}} - {{\bm L}}{\bm \varphi} + {\bm p}(t)\,,
\end{equation}
with ${\bm M}=\textrm{diag}(\{m_i\})$ and $\bm D = \textrm{diag}(\{d_i\})$.
Given performance outputs of the form 
\begin{equation}\label{eq:output}
 \bm y= 
 \setlength\arraycolsep{2pt}
 \begin{bmatrix}
	 {{\bm Q}^{(1,1)}} & 0 \\
        0 & {{\bm Q}^{(2,2)}}
	\end{bmatrix}^{1/2}
      \bigg[\begin{array}{cc}
        \bm \varphi \\
        \bm \omega 
      \end{array}\bigg]\,,
\end{equation}
we want to assess the long time output variance
\begin{equation}\label{eq:Long time output variance}
 \lim_{t\rightarrow \infty}\mathbb{E}[\bm y(t)^\top\bm{y}(t)]
\end{equation}
for the swing dynamics (\ref{eq:Swing}) subject to fluctuating 
power injection ${\bm p}(t)$ defined by the correlator
\begin{equation}\label{eq:exp decorrelation}
 \mathbb{E}[p_i(t_1)p_j(t_2)]=\delta_{ij} p_i^2 e^{-|t_1-t_2|/\tau}\,.
\end{equation}
In (\ref{eq:exp decorrelation}), $p_i^2$ denotes the equal time variance of the power injection disturbance at node $i$,
$\tau$ is the characteristic correlation time scale, and $\mathbb{E}$ denotes the expectation value.

Obtaining (\ref{eq:Long time output variance}) is equivalent to considering Dirac$-\delta$ impulse disturbances 
in the augmented dynamical system (see Appendix \ref{app:Colored noise})
\begin{equation}\label{eq:Swing2}
      \begin{bmatrix}
        \dot{\bm \varphi} \\
        \dot{\bm \omega} \\ 
        \dot \eta
      \end{bmatrix}=
      \setlength\arraycolsep{3pt}
       \begin{bmatrix}
        0 & \mathbb{I} & 0 \\
        -{\bm M}^{\text{-}1}{\bm L} & -{\bm M}^{\text{-}1}{\bm D} & {\bm M}^{\text{-}1}\bm p \\
        0 & 0 & -\tau^{\text{-1}}
      \end{bmatrix}
      \begin{bmatrix}
        \bm \varphi \\
        \bm \omega  \\
        \eta
      \end{bmatrix}
      +\begin{bmatrix}
        0 \\
        0 \\
        \delta(t)\eta_0
      \end{bmatrix}\,,
\end{equation}
with $\eta_0=\sqrt{2/\tau}$, and measuring the transient performance by evaluating the quadratic measure
\begin{equation}\label{eq:objective function}
\mathcal{P}= \int_{0}^{\infty}
 \begin{bmatrix}
        \bm \varphi^\top
        \bm \omega^\top 
        \eta
      \end{bmatrix}{\bm Q}
 \begin{bmatrix}
        \bm \varphi \\
        \bm \omega \\
        \eta
      \end{bmatrix}\dd t\,, \quad
      \bm Q=
\setlength\arraycolsep{2pt}
      \begin{bmatrix}
        {\bm Q}^{(1,1)} & 0 & 0 \\
        0 & {\bm Q}^{(2,2)} & 0 \\
	0 & 0 & 0 \\
        \end{bmatrix} . \quad
\end{equation}
which remains finite under the assumption that ${\bf u}^{(1)} \in {\rm ker}\,  \bm Q^{(1,1)}$.
Following~\cite{paganini2017global} and~\cite{coletta2017performance}, we introduce the change of variables
$\overline{{\bm \varphi}}={\bm M}^{1/2}{\bm \varphi}$ and
$\overline{{\bm \omega}}={\bm M}^{1/2}{\bm \omega}$. This allows us to rewrite (\ref{eq:Swing2}) as 
\begin{equation}\label{eq:Swing3}
      \begin{bmatrix}
        \dot{\overline{\bm \varphi}} \\
        \dot{\overline{\bm \omega}} \\ 
        \dot \eta
      \end{bmatrix}=
      \setlength\arraycolsep{3pt}
       \underbrace{\begin{bmatrix}
        0 & \mathbb{I} & 0 \\
        -{\bm L}_\textrm{M} & -{\bm M}^{\text{-}1}{\bm D} & {\bm M}^{\text{-}1/2}\bm p \\
        0 & 0 & -\tau^{\text{-1}}
      \end{bmatrix}}_{\bm A}
      \begin{bmatrix}
        \overline{\bm \varphi} \\
        \overline{\bm \omega}  \\
        \eta
      \end{bmatrix}
      +\begin{bmatrix}
        0 \\
        0 \\
        \delta(t)\eta_0
      \end{bmatrix}\,,
\end{equation}
where ${\bm L}_\textrm{M}$ is the symmetric matrix
\begin{equation}\label{eq:Def of Lm}
 {\bm L}_\textrm{M} = {\bm M}^{\text{-}1/2}{\bm L}{\bm M}^{\text{-}1/2}\,.
\end{equation}
For initial conditions $(\bm \varphi(0),\bm \omega(0), \eta(0))=(0,0,0)$, solving (\ref{eq:Swing3}) yields
\begin{equation}\label{eq: Definition B}
  \begin{bmatrix}
        \overline{\bm \varphi}(t) \\
        \overline{\bm \omega}(t) \\
        \eta
  \end{bmatrix} = e^{{\bm A} t}\underbrace{
    \begin{bmatrix}
        0 \\
        0 \\
        \eta_0
    \end{bmatrix}}_{\bm B}.
\end{equation}
The performance measure (\ref{eq:objective function}) can be expressed as 
\begin{equation}\label{eq:objective function 2}
\mathcal{P}={\bm B}^\top \bm X {\bm B}\,,
\end{equation}
with the observability Gramian $ {\bm X} = \int_{0}^{\infty}e^{{\bm A}^\top t} \bm Q_\textrm{M} e^{{\bm A}t} \dd t$,
and
\begin{equation}\label{eq:generic perf measure}
 {\bm Q}_\textrm{M} = 
      \setlength\arraycolsep{2pt}
      \begin{bmatrix}
        {\bm M}^{\text{-}1/2}{\bm Q}^{(1,1)}{\bm M}^{\text{-}1/2} & 0 & 0 \\
        0 & {\bm M}^{\text{-}1/2}{\bm Q}^{(2,2)}{\bm M}^{\text{-}1/2} & 0 \\ 
        0 & 0 & 0
      \end{bmatrix}\,.
\end{equation}
In what follows we denote the non zero blocks of ${\bm Q}_\textrm{M}$ by 
${\bm Q}^{(1,1)}_\textrm{M}={\bm M}^{\text{-}1/2}{\bm Q}^{(1,1)}{\bm M}^{\text{-}1/2}$,
and ${\bm Q}^{(2,2)}_\textrm{M}={\bm M}^{\text{-}1/2}{\bm Q}^{(2,2)}{\bm M}^{\text{-}1/2}$ respectively.

When the matrix ${\bm A}$ is Hurwitz, the system is asymptotically stable and the observability Gramian ${\bm X}$ satisfies
the Lyapunov equation
\begin{equation}\label{eq:Lyapunov equation}
{\bm A}^\top {\bm X} + {\bm X} {\bm A} = -{\bm Q}_\textrm{M}\,.
\end{equation}
In the present case however, the system is Laplacian and it follows from ${\bm L} {\bm u}^{(1)} = 0$
that ${\bm A}$ has a marginally stable mode ${\bm A}[{\bm M}^{1/2}{{\bm u}^{(1)}}, 0, 0]^\top=0$.
Nevertheless, this unobservable mode does not carry any relevant physical information and only reflects
the model's invariance under a global shift of all voltage phases.
Standard approaches to deal with this marginally stable mode include: (i) considering performance measures $\bm Q$
such that ${\bm u}^{(1)}\in\textrm{ker}(\bm Q^{(1,1)})$, in which case the observability Gramian is 
well defined by~(\ref{eq:Lyapunov equation}) with the additional constraint ${\bm X}[{\bm M}^{1/2}{{\bm u}^{(1)}}, 0, 0]^\top=0$
\cite{bamieh2013price, Gayme16, Poola17}, and (ii) introducing a regularizing parameter, $\epsilon$, in the Laplacian making it nonsingular,
and taking the limit $\epsilon\rightarrow0$ only at the very end of the calculation of a performance measure \cite{coletta2017performance}. 
In the derivations of this manuscript we will follow the latter approach. 

\section{CLOSED FORM EXPRESSION FOR QUADRATIC PERFORMANCE MEASURES}\label{sec:Expression generic performance}
Before providing a closed form expression for performance measures of the type (\ref{eq:objective function}),
we first recall two results proven in \cite{coletta2017performance}.\newline
\begin{proposition}[Laplacian regularization]\label{prop:Marginal mode assumption}\ \\
 Under the transformation ${\bm L}\rightarrow {\bm L}+\epsilon\mathbb{I}$, with regularizing parameter $\epsilon>0$, the system
 defined in~(\ref{eq:Swing3}) is asymptotically stable and has no marginally stable mode. 
\end{proposition}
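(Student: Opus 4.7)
My plan is to exploit the block upper triangular structure of $\bm A$ defined in~(\ref{eq:Swing3}) and reduce the asymptotic stability question to a scalar quadratic in $\lambda$. First I would note that after the regularization $\bm L\to\bm L+\epsilon\mathbb{I}$ with $\epsilon>0$, the symmetric matrix $\bm L_\textrm{M}=\bm M^{-1/2}(\bm L+\epsilon\mathbb{I})\bm M^{-1/2}$ becomes positive definite, since $\bm L$ is positive semi-definite and $\bm M$ is a positive definite diagonal matrix. This is the only property of the regularized system that will matter.

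I would then use that the last block row of $\bm A$ is $[\,0,\,0,\,-\tau^{-1}\,]$, so $\bm A$ is block upper triangular and its spectrum equals $\{-\tau^{-1}\}$ together with that of the $2N\times 2N$ upper-left block $\bm A_{11}$ coupling $(\overline{\bm\varphi},\overline{\bm\omega})$. Any eigenvector of $\bm A_{11}$ associated with $\lambda$ must have the form $[\bm v;\,\lambda\bm v]^\top$ with $\bm v\neq 0$ solving the quadratic pencil
\begin{equation*}
\bigl(\lambda^2\mathbb{I}+\lambda\,\bm M^{-1}\bm D+\bm L_\textrm{M}\bigr)\,\bm v=0.
\end{equation*}
Left-multiplying by $\bm v^*$ yields a scalar equation $a\lambda^2+b\lambda+c=0$ with $a=\|\bm v\|^2>0$, $b=\bm v^*\bm M^{-1}\bm D\bm v>0$ (diagonal positivity of $\bm M$ and $\bm D$), and $c=\bm v^*\bm L_\textrm{M}\bm v>0$ (by the regularization).

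Vieta's formulas then give $\lambda_++\lambda_-=-b/a<0$ and $\lambda_+\lambda_-=c/a>0$, which forces $\Re(\lambda_\pm)<0$ in both the two-real-roots and complex-conjugate cases. Combined with the decoupled eigenvalue $-\tau^{-1}<0$, every eigenvalue of $\bm A$ lies in the open left half plane, so $\bm A$ is Hurwitz and has no marginally stable mode. No genuine obstacle arises; the only subtle point is to secure the strict positivity of $c$, which is precisely what the $\epsilon\mathbb{I}$ term delivers and which the unperturbed Laplacian fails to provide through its kernel $\bm u^{(1)}$.
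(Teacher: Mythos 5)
Your proof is correct. Note that the paper itself does not reproduce a proof of Proposition~\ref{prop:Marginal mode assumption}: it defers it to \cite{coletta2017performance}. The closest internal counterpart is the explicit spectrum computed in Proposition~\ref{prop:Trasformation}, where under Assumption~\ref{ass:Homogeneous inertia damping ratio} the eigenvalues of $\bm A$ are $-\tau^{-1}$ and $\mu_i^{\pm}=\tfrac{1}{2}(-\gamma\pm\sqrt{\gamma^2-4\lambda_i^{\rm M}})$, from which $\mathrm{Re}\,\mu_i^{\pm}<0$ follows immediately once the regularization makes every $\lambda_i^{\rm M}>0$ (both roots real and negative when $\gamma^2>4\lambda_i^{\rm M}$, real part $-\gamma/2$ otherwise). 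Your quadratic-pencil argument with the Rayleigh quotient and Vieta's formulas reaches the same conclusion but is strictly more general: it never invokes the uniform damping-to-inertia ratio, so it establishes the proposition for arbitrary positive diagonal $\bm M$ and $\bm D$, which is in fact the generality in which the proposition is stated (it precedes Assumption~\ref{ass:Homogeneous inertia damping ratio}). The only point worth making explicit is that for complex $\bm v$ the scalars $b=\bm v^*\bm M^{-1}\bm D\,\bm v$ and $c=\bm v^*\bm L_{\rm M}\bm v$ are real (Hermitian forms of real symmetric matrices) as well as positive, so the quadratic genuinely has real positive coefficients and the Vieta/Routh--Hurwitz step applies; you implicitly use this and it is easily justified. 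Your identification of the single decoupled eigenvalue $-\tau^{-1}$ from the last block row, and of the kernel direction $\bm u^{(1)}$ as the sole obstruction removed by $\epsilon\mathbb{I}$, both match the structure the paper relies on.
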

\begin{proposition}[Solution of the Lyapunov equation]\label{prop:Sol Lyapunov eq}\ \\
 Let $\bm A$ be a non symmetric, diagonalizable matrix with eigenvalues $\mu_i\neq0$. Let 
 ${\bm T}_R$ (${\bm T}_L$) denote the matrix whose columns (rows) are the right (left) eigenvectors of $\bm A$.
 The observability Gramian $\bm X$, solution of the Lyapunov equation~(\ref{eq:Lyapunov equation}) is given by 
 \begin{equation}\label{eq:Solution of lyapunov equation}
 X_{ij} = \sum_{l,q=1}^{2N+1}\frac{-1}{\mu_l+\mu_q}(T_L)_{li}(T_L)_{qj}\left( {\bm T}_R^\top \bm Q^\textrm{M} {\bm T}_R\right)_{lq}\,.
\end{equation}
The proofs of Propositions \ref{prop:Marginal mode assumption} and \ref{prop:Sol Lyapunov eq} are given in \cite{coletta2017performance}.\\
\end{proposition}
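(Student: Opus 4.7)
The plan is to reduce the Lyapunov equation~(\ref{eq:Lyapunov equation}) to a diagonal equation by passing to the eigenbasis of $\bm A$, then read off the solution componentwise. Before doing anything, I would invoke Proposition~\ref{prop:Marginal mode assumption}: with the $\epsilon$-regularized Laplacian, $\bm A$ is Hurwitz, so every eigenvalue $\mu_i$ satisfies $\mathrm{Re}(\mu_i)<0$. In particular $\mu_l+\mu_q\neq 0$ for all $l,q$, which is exactly what is needed for the denominators in~(\ref{eq:Solution of lyapunov equation}) to make sense. The closed-form formula will be derived at finite $\epsilon$, and the $\epsilon\to 0$ limit is handled a posteriori at the level of the performance measure, as indicated in the text.

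Next I would use the diagonalization $\bm A = \bm T_R\, \bm{\mathcal{M}}\, \bm T_L$ with $\bm{\mathcal{M}}=\textrm{diag}(\{\mu_i\})$ and $\bm T_L = \bm T_R^{-1}$. The key algebraic identities are $\bm A\bm T_R = \bm T_R \bm{\mathcal{M}}$ and, by transposition, $\bm T_R^\top \bm A^\top = \bm{\mathcal{M}}\, \bm T_R^\top$ (since $\bm{\mathcal{M}}$ is diagonal). Multiplying~(\ref{eq:Lyapunov equation}) on the left by $\bm T_R^\top$ and on the right by $\bm T_R$, and introducing
\[
\bm Y \;:=\; \bm T_R^\top \bm X\, \bm T_R, \qquad \tilde{\bm Q} \;:=\; \bm T_R^\top \bm Q_\textrm{M}\, \bm T_R,
\]
collapses the Lyapunov equation to the Sylvester-type relation $\bm{\mathcal{M}} \bm Y + \bm Y \bm{\mathcal{M}} = -\tilde{\bm Q}$, which, because $\bm{\mathcal{M}}$ is diagonal, decouples componentwise into $(\mu_l+\mu_q) Y_{lq} = -\tilde Q_{lq}$. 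Solving gives $Y_{lq} = -\tilde Q_{lq}/(\mu_l+\mu_q)$.

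Finally I would invert the similarity transformation, $\bm X = (\bm T_R^\top)^{-1} \bm Y\, \bm T_R^{-1} = \bm T_L^\top \bm Y\, \bm T_L$, and expand in components to obtain
\[
X_{ij} \;=\; \sum_{l,q=1}^{2N+1} (T_L)_{li}\, (T_L)_{qj}\, Y_{lq} \;=\; \sum_{l,q=1}^{2N+1}\frac{-1}{\mu_l+\mu_q}\,(T_L)_{li}(T_L)_{qj}\,\bigl(\bm T_R^\top \bm Q_\textrm{M}\, \bm T_R\bigr)_{lq},
\]
which is exactly the claimed expression~(\ref{eq:Solution of lyapunov equation}).

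Since $\bm A$ here is non-normal (the symmetric $-\bm L_\textrm{M}$ block sits next to the asymmetric damping and noise-filter couplings), the only real subtlety is not to treat the eigenvectors as orthonormal and to carefully distinguish $\bm T_R$ from $\bm T_L = \bm T_R^{-1}$ when taking transposes. The main conceptual obstacle, such as it is, lies upstream of the computation itself: ensuring invertibility of the map $\bm Y\mapsto \bm{\mathcal{M}} \bm Y + \bm Y \bm{\mathcal{M}}$, which amounts to $\mu_l+\mu_q\neq 0$ and is precisely what Proposition~\ref{prop:Marginal mode assumption} secures. Once that is in hand, the derivation is a bookkeeping exercise in basis changes.
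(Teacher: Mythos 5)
Your proof is correct: the similarity transformation $\bm Y=\bm T_R^\top\bm X\bm T_R$, the componentwise solution $Y_{lq}=-\tilde Q_{lq}/(\mu_l+\mu_q)$, and the inversion via $\bm T_L=\bm T_R^{-1}$ all check out and reproduce~(\ref{eq:Solution of lyapunov equation}) exactly; this is the same diagonalization argument as in the cited reference \cite{coletta2017performance}, to which the paper defers the proof. Your observation that the stated hypothesis $\mu_i\neq0$ alone does not rule out $\mu_l+\mu_q=0$, and that the Hurwitz property from Proposition~\ref{prop:Marginal mode assumption} is what actually secures invertibility of the Sylvester operator, is a worthwhile refinement rather than a gap.
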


Under the transformation of Proposition \ref{prop:Marginal mode assumption}, $\bm A$ has no marginal modes and
the Lyapunov equation~(\ref{eq:Lyapunov equation}) suffices to define the observability Gramian. 
For the regularized Laplacian, Proposition \ref{prop:Sol Lyapunov eq} specifically provides a closed form expression for the observability 
Gramian in terms of the eigenvalues and eigenvectors of $\bm A$.
In this approach we use (\ref{eq:Solution of lyapunov equation}) to compute $\mathcal{P}={\bm B}^\top {\bm X} {\bm B}$
and discuss in what circumstances the limit $\epsilon\rightarrow0$ can be taken safely to recover the physically relevant quantities. \\

\begin{assumption}[Uniform damping to inertia ratio]\label{ass:Homogeneous inertia damping ratio}\ \\
 All synchronous machines have uniform damping over inertia ratios $d_i/m_i= \gamma>0~\forall i$.
 This assumption makes the computation of quadratic performance measures analytically tractable, and is a standard one
 in the literature~\cite{bamieh2013price,Gayme15,Gayme16,Poola17,paganini2017global,coletta2017performance}. 
 Machine measurements indicate that the ratio $d_i/m_i$ varies by at most an order of magnitude from rotating machine to rotating 
 machine \cite{KouMachineReport}.\newline
\end{assumption}

\begin{proposition}[Diagonalization of $\bm A$]\label{prop:Trasformation}\ \\
 Under the assumption of uniform damping to inertia ratios, the left and right transformation matrices 
 $\bm T_L$ and $\bm T_R$ diagonalizing ${\bm A}$ can be expressed in terms of the eigenvectors of 
 ${\bm L}_\textrm{M}$ through the linear transformations given below in (\ref{eq:T_R, T_L}), (\ref{eq:S_R}), and (\ref{eq:S_L}) . 
\end{proposition}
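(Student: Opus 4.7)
The plan is to exploit a block-triangular structure that $\bm A$ acquires under Assumption 1. Since $d_i/m_i = \gamma$ yields $\bm M^{-1}\bm D = \gamma \mathbb{I}$, one can write
$$
\bm A = \begin{bmatrix} \bm A_0 & \bm c \\ \bm 0^\top & -\tau^{-1} \end{bmatrix},
\quad
\bm A_0 = \begin{bmatrix} 0 & \mathbb{I} \\ -\bm L_\textrm{M} & -\gamma \mathbb{I} \end{bmatrix},
\quad
\bm c = \begin{bmatrix} \bm 0 \\ \bm M^{-1/2}\bm p \end{bmatrix}.
$$
Because $\bm A$ is block upper-triangular, its spectrum decomposes as the spectrum of $\bm A_0$ together with $-\tau^{-1}$, and a diagonalizing basis can be built from a basis of eigenvectors of $\bm A_0$ (extended trivially) plus a single extra vector associated with $-\tau^{-1}$.

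First I would diagonalize $\bm A_0$ explicitly. Let $\{\bm\phi^{(i)}\}$ be the (regularized) orthonormal eigenvectors of $\bm L_\textrm{M}$ with eigenvalues $\{\lambda_i^M>0\}$, which are well defined by Proposition \ref{prop:Marginal mode assumption}. The ansatz $[\bm\phi^{(i)},\,\mu\bm\phi^{(i)}]^\top$ turns $\bm A_0 \bm v = \mu \bm v$ into the scalar characteristic equation $\mu^2+\gamma\mu+\lambda_i^M=0$, yielding two eigenvalues $\mu_i^\pm = \tfrac{1}{2}(-\gamma\pm\sqrt{\gamma^2-4\lambda_i^M})$ per Laplacian mode and all $2N$ right eigenpairs of $\bm A_0$ as explicit linear images of the $\bm\phi^{(i)}$. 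Padding each such eigenvector with a zero in the last entry gives eigenvectors of the full $\bm A$ with the same eigenvalues; collecting them produces the block $\bm S_R$ in (\ref{eq:S_R}).

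Second, the remaining right eigenvector for $-\tau^{-1}$ is obtained from the ansatz $[\bm a,\, -\tau^{-1}\bm a,\, 1]^\top$: the first row of $\bm A\bm v=-\tau^{-1}\bm v$ becomes automatic, the third is trivial, and the second reduces to the linear system $(\bm L_\textrm{M} + (\tau^{-2}-\gamma\tau^{-1})\mathbb{I})\,\bm a = \bm M^{-1/2}\bm p$. Expanding in the $\bm\phi^{(i)}$ basis gives $\bm a = \sum_i (\bm\phi^{(i)\top}\bm M^{-1/2}\bm p)/(\lambda_i^M+\tau^{-2}-\gamma\tau^{-1})\,\bm\phi^{(i)}$, completing $\bm T_R$ in the form (\ref{eq:T_R, T_L}). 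The left eigenvectors are constructed by applying the same procedure to $\bm A^\top$, whose block-triangular structure has the roles of upper and lower blocks swapped: the left eigenvector for $-\tau^{-1}$ is the simple row $[\bm 0,\,\bm 0,\,1]$, while the left eigenvector for $\mu_i^\pm$ is built from $[(\mu_i^\pm+\gamma)\bm\phi^{(i)},\,\bm\phi^{(i)}]$ with a correction in its last component that couples to $\bm c$, producing (\ref{eq:S_L}). One then verifies $\bm T_L \bm A \bm T_R$ is diagonal by direct block multiplication and fixes the relative scalings via the biorthogonality $\bm T_L \bm T_R = \mathbb{I}$.

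The main obstacle is the algebraic bookkeeping of the correction terms in the left eigenvectors induced by the off-diagonal block $\bm c$, and assembling them so that $\bm T_L$ and $\bm T_R$ are expressed as a single linear transformation acting on the $\bm\phi^{(i)}$. A secondary, purely technical issue is the non-resonance condition $\lambda_i^M+\tau^{-2}-\gamma\tau^{-1}\neq 0$ for all $i$, equivalently $-\tau^{-1}\notin\mathrm{spec}(\bm A_0)$, which guarantees that the linear system defining the last eigenvector is solvable and that $\bm A$ has simple spectrum; this is a generic condition that can be enforced by an infinitesimal perturbation of $\tau$ if needed.
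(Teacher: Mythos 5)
Your proposal is correct and follows essentially the same route as the paper: it exploits the fact that $\bm M^{-1}\bm D=\gamma\mathbb{I}$ commutes with $\bm L_\textrm{M}$ to reduce the $2N\times 2N$ swing block to per-mode quadratics $\mu^2+\gamma\mu+\lambda_i^{\textrm{M}}=0$, and uses the block-triangular structure to append the $-\tau^{-1}$ eigenpair, reproducing the last columns of (\ref{eq:S_R}) and (\ref{eq:S_L}) after clearing denominators via $(1+\tau\mu_j^+)(1+\tau\mu_j^-)=1-\gamma\tau+\tau^2\lambda_j^{\textrm{M}}$. Your explicit non-resonance condition $1-\gamma\tau+\tau^2\lambda_i^{\textrm{M}}\neq 0$ usefully makes precise an assumption the paper leaves implicit in the denominators of (\ref{eq:S_R}) and (\ref{eq:S_L}), complementing its stated requirement $\Gamma_i\neq 0$.
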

\begin{proof}
For uniform damping to inertia ratios one has that ${\bm M}^{\text{-}1}\bm D=\gamma\mathbb{I}$. 
Thus ${\bm M}^{\text{-}1}\bm D$ and ${\bm L}_\textrm{M}$ commute and share a common eigenbasis.
Since ${\bm L}_\textrm{M}$ is symmetric, it has a real spectrum with eigenvalues
denoted by $\lambda^\textrm{M}_i$, and it is diagonalized by an orthogonal matrix ${\bm T}_\textrm{M}$ 
\begin{equation}
{\bm T}_\textrm{M}^\top{\bm L}_\textrm{M}{\bm T}_\textrm{M} = 
{\bm \Lambda}_\textrm{M} :=\textrm{diag}(\{\lambda_i^\textrm{M}\})\,.
\end{equation}
From the similarity transformation 
\begin{equation}\label{eq:Similarity trasformation}
\setlength\arraycolsep{2pt}
\begin{bmatrix}
        {\bm T}_\textrm{M}^\top & 0 & 0 \\
        0 & {\bm T}_\textrm{M}^\top & 0 \\
        0 & 0 & 1
\end{bmatrix} 
{\bm A}
\setlength\arraycolsep{2pt}
\begin{bmatrix}
        {\bm T}_\textrm{M} & 0 & 0\\
        0 & {\bm T}_\textrm{M} & 0\\
        0 & 0 & 1
    \end{bmatrix}
    =
\begin{bmatrix}
        0 & \mathbb{I} & 0\\
        -{\bm \Lambda}_\textrm{M} & -\gamma\mathbb{I} & {\bm T}_\textrm{M}^\top{\bm M}^{\text{-}1/2}\bm p \\
        0 & 0 & -\tau^{\text{-}1}
\end{bmatrix},
\end{equation}
one easily obtains the eigenvalues of $\bm A$ which are
\begin{equation}\label{eq:Eigenvalues A}
\left\{\mu_1^+,\ldots\mu_N^+,\mu_1^-,\ldots\mu_N^-,-\tau^{\text{-1}}\right\}
\end{equation}
with 
\begin{equation}\label{eq:def mu_i}
 \mu_i^{\pm}=\frac{1}{2}\left(-\gamma\pm\Gamma_i\right)\,, \quad \Gamma_i=\sqrt{\gamma^2-4\lambda_i^\textrm{M}}\,.
\end{equation}
From the last row of the right-hand side of (\ref{eq:Similarity trasformation}) one straightforwardly concludes that $-\tau^{\text{-}1}$
is an eigenvalue. The remaining $\mu_i^\pm$'s eigenvalues actually are the eigenvalues of the upper left $2N\times2N$ block of the right-hand side of 
(\ref{eq:Similarity trasformation}). This can be easily seen after the appropriate index reordering of this block and using that $\mathbb{I}$, 
$-{\bm\Lambda}_\textrm{M}$, and $-\gamma\mathbb{I}$ are all diagonal.

For $\Gamma_i\neq0$, the full transformation which diagonalizes $\bm A$
\begin{equation}
\setlength\arraycolsep{3pt}
{\bm T}_L {\bm A}{\bm T}_R = \begin{bmatrix}
        \textrm{diag}(\{ \mu_i^+\}) & 0 & 0\\
        0 & \textrm{diag}(\{ \mu_i^-\}) & 0\\
        0 & 0 & -\tau^{\text{-}1}\\
    \end{bmatrix}\,,
\end{equation}
and which fulfills the bi-orthogonality condition ${\bm T}_L {\bm T}_R={\bm T}_R {\bm T}_L=\mathbb{I}$ is
given by
\begin{equation}\label{eq:T_R, T_L}
 {\bm T}_R = 
 \setlength\arraycolsep{3pt}
 \begin{bmatrix}
        {\bm T}_\textrm{M} & 0 & 0 \\
        0 & {\bm T}_\textrm{M} & 0 \\
        0 & 0 & 1
 \end{bmatrix} {\bm S}_R \,,\qquad
 {\bm T}_L = 
 {\bm S}_L 
 \setlength\arraycolsep{3pt}
 \begin{bmatrix}
        {\bm T}^\top_\textrm{M} & 0 & 0\\
        0 & {\bm T}^\top_\textrm{M} & 0\\
        0 & 0 & 1\\
 \end{bmatrix}\,,\qquad
\end{equation}
with 
\begin{equation}\label{eq:S_R}
 {\bm S}_R = 
 \medmath{
 \setlength\arraycolsep{2pt}
    \begin{bmatrix}
        \textrm{diag}(\{\frac{1}{\sqrt{\Gamma_j}}\}) & \textrm{diag}(\{\frac{\text{i}}{\sqrt{\Gamma_j}}\}) & \textrm{vec}(\{\frac{\tau^2[{\bm T}_\textrm{M}^\top{\bm M}^{\text{-}1/2}\bm p]_j}{1-\gamma\tau+\tau^2\lambda_j^M}\})\\
        \textrm{diag}(\{\frac{\mu_j^+}{\sqrt{\Gamma_j}}\}) & \textrm{diag}(\{\frac{\text{i}\mu_j^-}{\sqrt{\Gamma_j}}\} ) & \textrm{vec}(\{\frac{-\tau [{\bm T}_\textrm{M}^\top{\bm M}^{\text{-}1/2}\bm p]_j}{1-\gamma\tau+\tau^2\lambda_j^M}\})\\
        0 & 0 & 1
    \end{bmatrix}\,,\qquad}
\end{equation}
and
\begin{equation}\label{eq:S_L}
 {\bm S}_L = 
 \medmath{
 \setlength\arraycolsep{1pt}
    \begin{bmatrix}
        \textrm{diag}(\{\frac{-\mu_j^-}{\sqrt{\Gamma_j}}\}) & \textrm{diag}(\{\frac{1}{\sqrt{\Gamma_j}}\}) & \textrm{vec}(\{\frac{\tau(1+\tau\mu_j^-)[{\bm T}_\textrm{M}^\top{\bm M}^{\text{-}1/2}\bm p]_j}{\sqrt{\Gamma_j}(1-\gamma\tau+\tau^2\lambda_j^M)}\})\\
        \textrm{diag}(\{\frac{-\text{i}\mu_j^+}{\sqrt{\Gamma_j}}\}) & \textrm{diag}(\{\frac{\text{i}}{\sqrt{\Gamma_j}}\} ) & \textrm{vec}(\{\frac{\text{i}\tau(1+\tau\mu_j^+)[{\bm T}_\textrm{M}^\top{\bm M}^{\text{-}1/2}\bm p]_j}{\sqrt{\Gamma_j}(1-\gamma\tau+\tau^2\lambda_j^M)}\})\\
        0 & 0 & 1 \\
    \end{bmatrix}}\,.
\end{equation}
\end{proof}

Equations~(\ref{eq:T_R, T_L}), (\ref{eq:S_R}) and (\ref{eq:S_L}) relate the eigenvectors of $\bm A$ to those of ${\bm L}_\textrm{M}$.
Combining this result with the result of Proposition \ref{prop:Sol Lyapunov eq} we express the observability Gramian of (\ref{eq:Solution of lyapunov equation})
in terms of the eigenvectors of ${\bm L}_\textrm{M}$. \\

\begin{proposition}[Generic performance measure]\label{prop: Perf measure}\ \\
 Consider the power system model defined in (\ref{eq:Swing3}) and satisfying Proposition \ref{prop:Marginal mode assumption}.
 Under the assumption of uniform damping to inertia ratios $d_i/m_i=\gamma~\forall i$, the quadratic performance measure $\mathcal{P}$
 defined in~(\ref{eq:objective function}) is given by
\begin{equation}\label{eq:General perfromance}
\begin{array}{l}
\displaystyle \mathcal{P}=\eta_0^2\sum_{l,q=1}^N[{\bm T}_\textrm{M}^\top{\bm M}^{\text{-}1/2}\bm p]_l[{\bm T}_\textrm{M}^\top{\bm M}^{\text{-}1/2}\bm p]_q \\
\quad\quad\times \left\{({\bm T}_\textnormal{M}^\top {\bm Q}^{(1,1)}_\textrm{M}{\bm T}_\textnormal{M})_{lq}~f
 +({\bm T}_\textnormal{M}^\top {\bm Q}^{(2,2)}_\textrm{M}{\bm T}_\textnormal{M})_{lq}~g \right\}\,,
\end{array}
\end{equation}
where $f \equiv f(\tau,\gamma,\lambda_l^\textrm{M},\lambda_q^\textrm{M})$ and $g \equiv g(\tau,\gamma,\lambda_l^\textrm{M},\lambda_q^\textrm{M})$ 
are scalar functions of $\tau,\gamma,\lambda_l^\textrm{M}$, and $\lambda_q^\textrm{M}$, given in Appendix \ref{app:f and g},
and where $\lambda_l^\textnormal{M}$ and $\bm T_\textnormal{M}$ are the eigenvalues and the orthogonal matrix diagonalizing 
 ${\bm L}_\textrm{M}$.
\end{proposition}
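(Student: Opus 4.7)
The plan is to evaluate $\mathcal{P}=\bm{B}^\top \bm{X}\bm{B}$ by combining the closed-form Gramian formula of Proposition~\ref{prop:Sol Lyapunov eq} with the explicit diagonalization of $\bm{A}$ given by Proposition~\ref{prop:Trasformation}. Since $\bm{B}$ from (\ref{eq: Definition B}) has its only nonzero entry $\eta_0$ in position $2N+1$, the bilinear form collapses to $\mathcal{P}=\eta_0^2\,X_{2N+1,\,2N+1}$, so only the last column of $\bm{T}_L$ and the middle matrix $\bm{T}_R^\top \bm{Q}_\textrm{M}\bm{T}_R$ are needed in the sum (\ref{eq:Solution of lyapunov equation}).

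First I would read off the $(2N+1)$-th column of $\bm{T}_L$. From (\ref{eq:T_R, T_L}), the block-diagonal factor $\textrm{diag}(\bm{T}_\textrm{M}^\top,\bm{T}_\textrm{M}^\top,1)$ acts as the identity on that column, so it coincides with the last column of $\bm{S}_L$ in (\ref{eq:S_L}); in particular, every occurrence of $[\bm{T}_\textrm{M}^\top \bm{M}^{-1/2}\bm{p}]_j$ in the target formula~(\ref{eq:General perfromance}) traces back to this column. Next I would compute $\bm{T}_R^\top \bm{Q}_\textrm{M}\bm{T}_R$. The block-diagonal shape of $\bm{Q}_\textrm{M}$ in~(\ref{eq:generic perf measure}) yields
\[
\bm{T}_R^\top \bm{Q}_\textrm{M}\bm{T}_R=\bm{S}_R^\top\,\textrm{diag}\bigl(\bm{T}_\textrm{M}^\top \bm{Q}^{(1,1)}_\textrm{M}\bm{T}_\textrm{M},\,\bm{T}_\textrm{M}^\top \bm{Q}^{(2,2)}_\textrm{M}\bm{T}_\textrm{M},\,0\bigr)\,\bm{S}_R ,
\]
and the first $2N$ columns of $\bm{S}_R$ in~(\ref{eq:S_R}) are diagonal in their upper two blocks with a vanishing bottom row, so the entries of this $(2N+1)\times(2N+1)$ matrix reduce to elementary scalar prefactors multiplying $(\bm{T}_\textrm{M}^\top \bm{Q}^{(k,k)}_\textrm{M}\bm{T}_\textrm{M})_{lq}$.

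Then I would substitute the two ingredients into (\ref{eq:Solution of lyapunov equation}) and split the double sum according to whether each index lies in $\{1,\ldots,N\}$, $\{N+1,\ldots,2N\}$, or equals $2N+1$. Contributions with $l=2N+1$ or $q=2N+1$ drop out because of the zero in the last diagonal block of $\bm{Q}_\textrm{M}$, leaving four sub-sums parameterized by a sign pair $(\epsilon_1,\epsilon_2)\in\{+,-\}^2$ and by $l,q\in\{1,\ldots,N\}$. Collecting the coefficients of $(\bm{T}_\textrm{M}^\top \bm{Q}^{(1,1)}_\textrm{M}\bm{T}_\textrm{M})_{lq}$ and of $(\bm{T}_\textrm{M}^\top \bm{Q}^{(2,2)}_\textrm{M}\bm{T}_\textrm{M})_{lq}$ defines the scalar functions $f$ and $g$ respectively, via the denominators $\mu_l^{\epsilon_1}+\mu_q^{\epsilon_2}$ and the factors $\tau(1+\tau\mu_j^{\mp})/[\sqrt{\Gamma_j}(1-\gamma\tau+\tau^2\lambda_j^\textrm{M})]$ inherited from the last column of $\bm{S}_L$.

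The main obstacle is the algebraic reduction of these four sign-labelled sub-sums to the compact scalar expressions announced in Appendix~\ref{app:f and g}. I would use repeatedly the identities $\mu_j^++\mu_j^-=-\gamma$, $\mu_j^+\mu_j^-=\lambda_j^\textrm{M}$, and $\mu_j^+-\mu_j^-=\Gamma_j$ to clear the $\sqrt{\Gamma_j}$ denominators, and check that the resulting expressions are regular at $\Gamma_j=0$ by a separate limiting argument. Finally, the Laplacian regularization $\bm{L}\to\bm{L}+\epsilon\mathbb{I}$ of Proposition~\ref{prop:Marginal mode assumption} must be carried through the whole derivation and I would verify that the standing hypothesis $\bm{u}^{(1)}\in\textrm{ker}(\bm{Q}^{(1,1)})$ cancels the would-be $1/\lambda_1^\textrm{M}$ pole, so that the $\epsilon\to 0$ limit is finite and (\ref{eq:General perfromance}) is recovered in that limit.
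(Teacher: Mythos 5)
Your overall strategy --- collapsing $\mathcal{P}=\bm{B}^\top\bm{X}\bm{B}$ to $\eta_0^2 X_{2N+1,2N+1}$ and feeding the explicit $\bm{T}_L$, $\bm{T}_R$ of Proposition~\ref{prop:Trasformation} into the spectral formula (\ref{eq:Solution of lyapunov equation}) --- is exactly the route the paper sets up (the proof itself is deferred to \cite{Col19}, so there is no line-by-line comparison to make, but the text preceding Proposition~\ref{prop: Perf measure} announces precisely this combination of Propositions~\ref{prop:Sol Lyapunov eq} and~\ref{prop:Trasformation}). There is, however, a concrete error in your reduction: the terms of (\ref{eq:Solution of lyapunov equation}) with $l=2N+1$ or $q=2N+1$ do \emph{not} drop out. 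The zero in the last diagonal block of $\bm{Q}_\textrm{M}$ would only kill these terms if the last column of $\bm{T}_R$ were supported on its last component, but by (\ref{eq:S_R}) that column carries the nonzero blocks $\textrm{vec}(\{\tau^2[{\bm T}_\textrm{M}^\top{\bm M}^{\text{-}1/2}\bm p]_j/(1-\gamma\tau+\tau^2\lambda_j^\textrm{M})\})$ and $\textrm{vec}(\{-\tau[{\bm T}_\textrm{M}^\top{\bm M}^{\text{-}1/2}\bm p]_j/(1-\gamma\tau+\tau^2\lambda_j^\textrm{M})\})$, which are hit by $\bm{Q}^{(1,1)}_\textrm{M}$ and $\bm{Q}^{(2,2)}_\textrm{M}$. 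Hence $(\bm{T}_R^\top\bm{Q}_\textrm{M}\bm{T}_R)_{l,2N+1}$ and $(\bm{T}_R^\top\bm{Q}_\textrm{M}\bm{T}_R)_{2N+1,2N+1}$ are generically nonzero, and since $(T_L)_{2N+1,2N+1}=1$ these terms survive. They are quadratic in $\bm{p}$ (one factor of $[{\bm T}_\textrm{M}^\top{\bm M}^{\text{-}1/2}\bm p]$ coming from the last column of $\bm{S}_L$ and one from the last column of $\bm{S}_R$), i.e.\ of exactly the same structural form as the terms you keep, so omitting them changes $f$ and $g$ rather than leaving an overall factor.

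A consistency check shows the omission cannot be harmless. The four sign-labelled sub-sums you retain all inherit from the last column of $\bm{S}_L$ the product of denominators $(1-\gamma\tau+\tau^2\lambda_l^\textrm{M})(1-\gamma\tau+\tau^2\lambda_q^\textrm{M})$, whereas the stated $f$ and $g$ in (\ref{eq:full f})--(\ref{eq:full g}) contain only the factors $(1+\gamma\tau+\lambda^\textrm{M}\tau^2)=\tau^2(\tau^{\text{-}1}-\mu^+)(\tau^{\text{-}1}-\mu^-)$; the latter are precisely the resolvents $-1/(\mu_l^\pm+\mu_{2N+1})$ with $\mu_{2N+1}=-\tau^{\text{-}1}$, which enter only through the cross terms you discard. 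Both the cancellation of the $(1-\gamma\tau+\tau^2\lambda^\textrm{M})$ poles and the appearance of the $(1+\gamma\tau+\tau^2\lambda^\textrm{M})$ poles therefore require summing over all nine index sectors (each of $l,q$ ranging over the $+$, $-$, and $2N+1$ blocks), not four. The remainder of your outline --- reading off the last column of $\bm{T}_L$, the block reduction of $\bm{T}_R^\top\bm{Q}_\textrm{M}\bm{T}_R$, the use of $\mu_j^++\mu_j^-=-\gamma$, $\mu_j^+\mu_j^-=\lambda_j^\textrm{M}$, $\mu_j^+-\mu_j^-=\Gamma_j$, the limiting argument at $\Gamma_j=0$, and carrying the regularization $\epsilon$ through to the end --- is sound and matches the paper's framework.
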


The proof of Proposition \ref{prop: Perf measure} will be given elsewhere~\cite{Col19}.

The assumption that Proposition \ref{prop:Marginal mode assumption} holds implies that the eigenvalues $\lambda_l^\textrm{M}$ in Proposition \ref{prop: Perf measure}
are functions of the regularizing parameter $\epsilon$, that is $\lambda_l^\textrm{M}\equiv\lambda_l^\textrm{M}(\epsilon)$.
While (\ref{eq:General perfromance}) formally holds for $\epsilon\neq0$, we illustrate below how for performance measures $\bm Q$ such that 
$[\bm u^{(1)},0,0]^\top\in\textrm{ker}({\bm Q})$ one can safely take the limit $\epsilon\rightarrow0$.

\section{Phase coherence}\label{sec:Applications}

Given the average phase deviation $\tilde{\varphi}=\sum_{i=1}^N\varphi_i/N$, the phase coherence metric
$\mathcal{P}_{\varphi}=\int_0^\infty \sum_{i=1}^N({\varphi}_i(t)-\tilde{\varphi}(t))^2 \dd t$ measures the transient voltage phase variance.
It is obtained taking ${\bm Q}^{(1,1)}=\mathbb{I}-\bm{u}^{(1)}{\bm{u}^{(1)}}^\top$ and ${\bm Q}^{(2,2)}=0$ in (\ref{eq:objective function}). 
For this performance measure $[\bm u^{(1)},0,0]^\top\in\textrm{ker}({\bm Q})$ and we show how one can safely let the regularizing parameter $\epsilon$
go to zero.
In the case of uniform inertia, $m_i=m~\forall i$,
we have ${\bm Q}^{(1,1)}_\textrm{M}=(\mathbb{I}-\bm{u}^{(1)}{\bm{u}^{(1)}}^\top)/m$ and ${\bm Q}^{(2,2)}_\textrm{M}=0$ which, once inserted 
in (\ref{eq:General perfromance}) gives
\begin{align}\label{eq:phasecoherence}
\displaystyle \mathcal{P}_{\varphi}=\frac{\eta_0^2}{m^2}\sum_{l,q=1}^N & [{\bm T}_\textrm{M}^\top\bm p]_l[{\bm T}_\textrm{M}^\top\bm p]_q 
({\bm T}_\textnormal{M}^\top(\mathbb{I}-\bm{u}^{(1)}{\bm{u}^{(1)}}^\top){\bm T}_\textnormal{M})_{lq} \nonumber  \\
& \quad \times f(\tau,\gamma,\lambda_l^\textrm{M},\lambda_q^\textrm{M})\,.
\end{align}
For homogeneous inertia values, we also have that ${\bm L}_M={\bm L}/m$. It follows that both matrices have same eigenvectors ${\bm T}_\textrm{M}\equiv{\bm T}$,
while their eigenvalues differ by a factor $m$, $\lambda_l^\textrm{M}=\lambda_l/m$.
Using the orthogonality conditions $({\bm T}^\top {\bm T})_{lq}=\delta_{lq}$
and $({\bm T}^\top \bm{u}^{(1)}{\bm{u}^{(1)}}^\top {\bm T})_{lq}=\delta_{lq}\delta_{l1}$,
(\ref{eq:phasecoherence}) simplifies to
\begin{equation}
\displaystyle \mathcal{P}_{\varphi}=\frac{\eta_0^2}{m^2}\sum_{l\geq2}^N{[{\bm T}^\top\bm p]}^2_l 
~f(\tau,\gamma,\lambda_l/m,\lambda_l/m)\,.
\end{equation}
When $l=q$, the function $f(\tau,\gamma,\lambda_l^\textrm{M},\lambda_q^\textrm{M})$ simplifies to
$f(\tau,\gamma,\lambda_l^\textrm{M},\lambda_l^\textrm{M})=\tau(1+\gamma\tau)/[2\lambda_l^\textrm{M}\gamma(\tau^{\text{-}1}+\gamma+\lambda_l^\textrm{M}\tau)]$
and the performance measure finally becomes
\begin{equation}
\mathcal{P}_{\varphi}=\sum_{l\geq2}^N{[{\bm T}^\top\bm p]}^2_l \frac{(m+d\tau)}{\lambda_ld(\tau^{\text{-}1}m+d+\lambda_l\tau)}\,.
\end{equation}
We note that since the summation index $l\geq2$, this expression is well behaved also if the regularizing parameter $\epsilon$ is set to zero.
In the specific case where the power injection fluctuation is localized at a single node labeled $\alpha$, i.e. $\bm p = p\hat{\bm e}_\alpha$,
we have
\begin{equation}\label{eq:perf phase single node uniform m}
\mathcal{P}_{\varphi}=\sum_{l\geq2}^Np^2{u_\alpha^{(l)}}^2 \frac{(m+d\tau)}{\lambda_ld(\tau^{\text{-}1}m+d+\lambda_l\tau)}\,,
\end{equation}
where ${\bm u}^{(l)}$ is the eigenvector of the network's Laplacian $\bm L$, associated to the eigenvalue $\lambda_l$.

We next interpret our result (\ref{eq:perf phase single node uniform m}) from a graph-theoretic perspective.
We show that depending on the correlation time scale $\tau$ and  on the measure considered,
the transient performance is either independent of the location of the noisy node or is determined
by the resistance closeness centrality of the noisy node.

The effective resistance distance between any two nodes $i$ and $j$ of the network is defined as 
$\Omega_{ij}={L}^\dagger_{ii}+{L}^\dagger_{jj}-2{L}^\dagger_{ij}$, where ${\bm L}^\dagger$ 
is the Moore-Penrose pseudoinverse of the network's Laplacian matrix ${\bm L}$ \cite{Klein93, Stephenson89}. 
It is known as the \textit{resistance} distance because if one replaces the network edges by resistors
with a resistance $R_{ij}=1/b_{ij}$, then $\Omega_{ij}$ is equal to the
equivalent network resistance when a current is injected at node $i$ and extracted at node $j$ with no injection anywhere
else. 
The pseudoinverse of ${\bm L}$ is given by ${\bm L}^\dagger = {\bm T}\textrm{diag}(\{0,\lambda_2^{\text{-}1},\ldots,\lambda_N^{\text{-}1}\}){\bm T}^\top$.
This allows to rewrite the resistance distance in terms of the eigenvalues and eigenvectors of $\bm L$ \cite{klein1997graph, xiao2003resistance}
\begin{equation}\label{eq: resistance distance}
 \Omega_{ij}=\sum_{l\geq2}^N\lambda_l^{\text{-}1}(u_{i}^{(l)}-u_{j}^{(l)})^2\,.
\end{equation}
The resistance distance closeness centrality of node $\alpha$, $C_\alpha$, is the inverse average distance separating node $\alpha$ from the rest of the network
$C_\alpha^{\text{-}1}=\sum_{j=1}^N{\Omega_{\alpha j}}/N$~\cite{Bozzo13}.
Using (\ref{eq: resistance distance}), the inverse closeness centrality is given by
\begin{equation}\label{eq:Closeness centrality}
  C_\alpha^{\text{-}1}=\sum_{l\geq2}^N\frac{{u_\alpha^{(l)}}^2}{\lambda_l}+\frac{1}{N}\sum_{l\geq2}^N\frac{1}{\lambda_l}\,,
\end{equation}
where we have used that $\sum_{j=1}^N {u_j^{(l)}}^2={\bm{u}^{(l)}}^\top \bm{u}^{(l)}=1$, and that $\sum_{j=1}^N u_j^{(l)}=0$ for $l \ne 1$, since ${\bm u}^{(l)}\perp{\bm u}^{(1)}$.
We note that only the first term in the right-hand side of (\ref{eq:Closeness centrality}) depends on $\alpha$.
The second term is proportional to the networks Kirchoff's index $\mathit{Kf}_1=N\sum_{l\geq2}^N\lambda_l^{\text{-}1}$, and is thus 
independent of the location of the noisy node.

We next consider the performance measures $\mathcal{P}_\varphi$ in the limit when
the correlation time $\tau$ is much shorter than any 
characteristic time scale of the swing equation.
Expanding (\ref{eq:perf phase single node uniform m})  
in the limit $\tau\ll1$ we get
\begin{align}\label{eq:small tau}
\displaystyle\mathcal{P}_{\varphi}&\approx \frac{\tau p^2}{d}\sum_{l\geq2}^N \frac{{u_\alpha^{(l)}}^2}{\lambda_l} +\mathcal{O}(\tau^2) \nonumber \\
 &= \frac{\tau p^2}{d}\left[C_\alpha^{\text{-}1}-\mathit{Kf}_1/N^2\right] +\mathcal{O}(\tau^2)\,.
\end{align}
We see that for colored noise injection at node $\alpha$ 
with fast decaying correlations, the phase coherence is proportional to the inverse closeness centrality of 
the noisy node. 

In the opposite limit $\tau \gg 1$ a Taylor expansion of Equation
(\ref{eq:perf phase single node uniform m}) gives 
\begin{align}\label{eq:large tau}
\displaystyle\mathcal{P}_{\varphi} &\approx p^2 \sum_{l\geq2}^N\frac{{u_\alpha^{(l)}}^2}{\lambda_l^2} +\mathcal{O}(\tau^{\text{-}1})\,.
\end{align}
The phase coherence measure $\mathcal{P}_\varphi$ still depends on the location of the noisy node $\alpha$, but this time with a more involved expression of network related quantities. 

\section{CONCLUSION}\label{sec:Conclusion}

Our results illustrate how finite-time 
correlations in power fluctuations affect the transient performance. We have shown how, depending on the correlation time scale
$\tau$, 
performance measures can change qualitatively form being network independent to network dependent.
Our analytical results clearly emphasize that the resistance distance, and the associated resistance closeness centrality are the 
physically relevant measures of node criticality.

Compared to white noise, colored noise inputs provide a better description of the stochastic fluctuations of 
renewable generation. Future works should  try to improve this modeling assumption and incorporate 
the non Gaussian character of renewable generation~\cite{schmietendorf2017impact}.

\section*{ACKNOWLEDGMENTS}
 We thank F.~D\"{o}rfler for useful discussions.
 This work was supported by the Swiss National Science Foundation
 under an AP Energy Grant.

\section{APPENDIX}
\subsection{Colored Noise from Gaussian white noise}\label{app:Colored noise}
In this section we illustrate how the augmented dynamical model (\ref{eq:Swing2}), provides the framework to treat exponentially decorrelating noise.
Consider the differential equation
\begin{equation}\label{eq:Generating colored noise}
 \dot{\eta}(t)=-\tau^{-1}\eta(t) + \eta_0\xi(t)\,,
\end{equation}
where $\eta_0=\sqrt{2/\tau}$ and $\xi(t)$ is a Gaussian white noise signal, such that $\mathbb{E}[\xi(t_1)\xi(t_2)]=\delta(t_1-t_2)$.
Solving (\ref{eq:Generating colored noise}), with initial condition $\eta(0)=0$ leads to
\begin{equation}
 {\eta}(t)=\eta_0\int_0^t\xi(s)e^{(s-t)/\tau}\dd s\,,
\end{equation}
from which one obtains
\begin{equation}
 \mathbb{E}[\eta(t_1)\eta(t_2)]=\eta_0^2 \frac{\tau}{2}\left[e^{-|t_1-t_2|/\tau}-e^{-(t_1+t_2)/\tau}\right]\,,
\end{equation}
which simplifies to 
\begin{equation}
 \mathbb{E}[\eta(t_1)\eta(t_2)]=e^{-|t_1-t_2|/\tau}\,,
\end{equation}
for $t_1, t_2 \gg \tau$.
\subsection{Generic performance measure coefficients}\label{app:f and g}
The functions $f$
and $g$  in (\ref{eq:General perfromance}) are given by
\begin{align}\label{eq:full f}
 f(\tau,\gamma,\lambda_l^\textrm{M},\lambda_q^\textrm{M}) &=\medmath{(1+\gamma\tau+\lambda_l^\textrm{M}\tau^2)^{-1}(1+\gamma\tau+\lambda_q^\textrm{M}\tau^2)^{-1}\frac{\tau^2}{2}} \nonumber \\[2mm]
\times&\quad\medmath{\left[ 
\frac{8\gamma^2\tau+4\gamma+2\gamma\tau^2\left(2\gamma^2+\lambda_l^\textrm{M}+\lambda_q^\textrm{M}\right)}{2\gamma^2\left(\lambda_l^\textrm{M}+\lambda_q^\textrm{M}\right)+\left(\lambda_l^\textrm{M}-\lambda_q^\textrm{M}\right)^2}+\tau^3
\right]}\,,
\end{align}

and 
\begin{equation}\label{eq:full g}
\begin{array}{l}
 g(\tau,\gamma,\lambda_l^\textrm{M},\lambda_q^\textrm{M}) = \medmath{(1+\gamma\tau+\lambda_l^\textrm{M}\tau^2)^{-1}(1+\gamma\tau+\lambda_q^\textrm{M}\tau^2)^{-1}\frac{\tau^2}{2}}\\[2mm]
\times \,\medmath{\frac{\left[2\gamma^2\tau\left(\lambda_l^\textrm{M}+\lambda_q^\textrm{M}\right)-\tau\left(\lambda_l^\textrm{M}-\lambda_q^\textrm{M}\right)^2+2\gamma\left(\lambda_l^\textrm{M}+\lambda_q^\textrm{M}+2\tau^2\lambda_l^\textrm{M}\lambda_q^\textrm{M}\right)\right]}
{2\gamma^2\left(\lambda_l^\textrm{M}+\lambda_q^\textrm{M}\right)+\left(\lambda_l^\textrm{M}-\lambda_q^\textrm{M}\right)^2}}\,.
\end{array}
\end{equation}


\end{document}